\newtheorem{thm}{Theorem}[section]
\newtheorem{defn}[thm]{Definition}
\numberwithin{equation}{section}
\newcommand{\Der}{\textrm{Der}}
\begin{document}

\title[Solvable Leibniz algebras with abelian nilradical]{Classification of solvable Leibniz algebras with abelian nilradical and $k-1$ dimensional extension}%
\author{Gaybullaev R.K., Khudoyberdiyev A.Kh., Pohl K.}

\address{[Khudoyberdiyev \ A.\ Kh.]
National University of Uzbekistan, Institute of Mathematics Academy of Sciences of Uzbekistan, Tashkent, 100174, Uzbekistan.}
\email{khabror@mail.ru}

\address{[Gaybullaev \ R. \ K]
National University of Uzbekistan,  100174, Uzbekistan.}
\email{r\_gaybullaev@mail.ru}

\address{[Pohl \ K.]
St. Olaf College,
1500 St. Olaf Avenue, Northfield, Minnesota 55057, USA.}
\email{pohl1@stolaf.edu}

%

\begin{abstract}
This work is devoted to the classification of solvable Leibniz algebras with an abelian nilradical. We consider $k-1$ dimensional extension of $k$-dimensional abelian algebras and classify all $2k-1$-dimensional solvable Leibniz algebras with an abelian nilradical of dimension $k$.

\end{abstract}

\maketitle
\textbf{Mathematics Subject Classification 2010}: 17A32; 17A36; 17A60; 17A65; 17B30.

\textbf{Key Words and Phrases}: Leibniz algebra; solvability; nilpotency; nilradical; derivation

\maketitle

\section{Introduction.}

Leibniz algebras, a ``noncommutative version'' of Lie algebras,
were first introduced in the mid- 1960's by Blokh \cite{Bl} under
the name ``$D$-algebras". They appeared again in the 1990's after
Loday's work \cite{Loday1}, where he reintroduced them, coining them
``Leibniz algebras".

According to the structural theory of Lie algebras, a finite-dimensional Lie algebra can be written as a semidirect sum of its semisimple subalgebra and its
solvable radical (Levi's theorem). The semisimple part is a direct sum of simple Lie algebras which were completely classified in the fifties of the last century.
In the case of Leibniz algebras,
there is also an analogue to Levi's theorem \cite{Bar}. Namely, the
decomposition of a Leibniz algebra into the semidirect sum of its
solvable radical and a semisimple Lie algebra can be obtained. The
semisimple part can be described from simple Lie ideals (see
\cite{Jac}). Therefore, today there is a focus on studying the
solvable radical.

Owing to a result of \cite{Muba}, an approach to the study of
solvable Lie algebras through the use of the nilradical was developed in
\cite{Ancochea, NdWi,SnWi}, etc. In particular, in \cite{NdWi} solvable Lie algebras with abelian nilradicals are investigated.

The analogue of
Mubarakzjanov's \cite{Muba} result has been applied in the Leibniz
algebra case in \cite{Casas}, showing the importance of
consideration of the nilradical in the case of Leibniz algebras as
well. The papers \cite{CanKhud,Casas,Lad} also are devoted
to the study of solvable Leibniz algebras by considering the
nilradical.

It should be noted that any solvable Leibniz algebra $L$ with nilradical $N$ can be written as a direct sum of vector spaces $L =N \oplus Q,$ where $Q$ is the complementary vector space to the nilradical. In \cite{Ad} and \cite{LinBos}, solvable Leibniz algebras with an abelian nilradical are investigated. It was proven that the maximal dimension of a solvable Leibniz algebra with a $k$-dimensional abelian nilradical is $2k.$ Additionally, in  \cite{Ad} this maximal case was classified and some results regarding the classification with a $1$-dimensional extension were presented. In this paper we give the classification of solvable Leibniz algebras with abelian nilradical and $k-1$ dimensional extension.

It should be noted that a solvable Leibniz algebra $L$ with condition $dimQ = dim(N/N^2)$ can be classified using the classification of solvable Leibniz algebras with a $k$-dimensional abelian nilradical and a $k$-dimensional complementary vector space.

The natural next step is the classification of solvable Leibniz algebra with condition $dimQ = dim(N/N^2)-1.$ In order to perform this classification, the classification of solvable Leibniz algebras with a $k$-dimensional abelian nilradical and a $k-1$-dimensional complementary vector space should first be obtained. In the case $k=2$ we have three dimensional algebras which were classified in \cite{Casas 3-dim}, and the case $k=3$ is also already given in \cite{Khud}.
 In this paper, we classify the $2k-1$ case for any $k$.

Throughout this paper all algebras (vector spaces) considered are finite-dimensional and over the field of complex numbers. Also, in the tables of multiplications of algebras, we give nontrivial products only.

\section{Preliminaries}

This section is devoted to recalling some basic notions and concepts used throughout the paper.

\begin{defn}
A vector space with a bilinear bracket $(L,[\cdot,\cdot])$
is called a Leibniz algebra if for any $x,y,z\in L$ the so-called
Leibniz identity
$$\big[x,[y,z]\big]=\big[[x,y],z\big]-\big[[x,z],y\big],$$
holds.
\end{defn}

Here, we adopt the right Leibniz identity; since the bracket is
not skew-symmetric, there exists the version corresponding to the
left Leibniz identity,
 \[\big[[x,y],z\big] =  \big[x,[y,z]\big] - \big[y,[x,z]\big] \,. \]

The sets $Ann_r(L)=\{x \in L: [y,x]=0, \ \forall y \in L\}$ and
$Ann_l(L)=\{x \in L: [x,y]=0, \ \forall y \in L\}$
 are called the right and left annihilators of $L$, respectively.
It is observed that $Ann_r(L)$ is a
two-sided ideal of $L$, and for any $x, y \in L$ the elements $[x,x]$ and $[x,y]+
[y,x]$ are always in $Ann_r(L)$.

The set $C(L)=\{z \in L: [x,z]=[z,x]=0, \ \forall x \in L\}$
is called the Center of $L$.

For a given Leibniz algebra $(L,[\cdot,\cdot])$ the sequences of
two-sided ideals is defined recursively as follows:
\[L^1=L, \ L^{k+1}=[L^k,L],  \ k \geq 1, \qquad \qquad
L^{[1]}=L, \ L^{[s+1]}=[L^{[s]},L^{[s]}], \ s \geq 1.
\]
These are said to be the lower central and the derived series of $L$,
respectively.

\begin{defn} A Leibniz algebra $L$ is said to be
nilpotent (respectively, solvable), if there exists $n\in\mathbb
N$ ($m\in\mathbb N$) such that $L^{n}=0$ (respectively,
$L^{[m]}=0$).
\end{defn}

\begin{defn}
  An ideal of a Leibniz algebra is called nilpotent if it is nilpotent as a subalgebra.
  \end{defn}
It is easy to see that the sum of any two nilpotent ideals is nilpotent. Therefore the maximal nilpotent ideal always exists.

 \begin{defn}
  The maximal nilpotent ideal of a Leibniz algebra is said to be the nilradical of the algebra.
  \end{defn}

\begin{defn} A linear map $d \colon L \rightarrow L$ of a Leibniz algebra $(L,[\cdot,\cdot])$ is said to be a
 derivation if for all $x, y \in L$, the following  condition holds: \[d([x,y])=[d(x),y] + [x, d(y)] \,.\]
\end{defn}
The set of all derivations of $L$ is denoted by $\Der(L).$ The set $\Der(L)$ is a Lie algebra with respect to the commutator.

For a given element $x$ of a Leibniz algebra $L$, the right
multiplication operator $R_x \colon L \to L,$ defined by $R_x(y)=[y,x],  y \in
L$ is a derivation. In fact, Leibniz algebras are characterized by this property regarding right multiplication operators. (Recall that left Leibniz algebras are characterized by the same property with left multiplication operators.) As in the Lie case, these kinds of derivations are said to be inner derivations.



\begin{defn}
Let $d_1, d_2, \dots, d_n$ be derivations of a Leibniz algebra
$L.$ The derivations $d_1, d_2, \dots, d_n$ are said to be linearly
nil-independent if for $\alpha_1, \alpha_2, \dots ,\alpha_n \in \mathbb{C}$
and a natural number $k$,
$$(\alpha_1d_1 + \alpha_2d_2+ \dots +\alpha_nd_n)^k=0\ \hbox{implies }\ \alpha_1 = \alpha_2= \dots =\alpha_n=0.$$
\end{defn}

Note that in the above definition the power is understood with respect to composition.

Let $L$ be a solvable Leibniz algebra. Then it can be written
in the form $L=N \oplus Q,$ where $N$ is the nilradical and $Q$ is the
complementary subspace. The following is a result from  \cite{Casas} on the dimension of $Q$ which we make use of in the paper.

\begin{thm}\label{nilr}
Let $L$ be a solvable Leibniz algebra and $N$ be its nilradical. Then
the dimension of $Q$ is not
greater than the maximal number of nil-independent derivations of
$N.$
\end{thm}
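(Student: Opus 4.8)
The plan is to exhibit exactly $\dim Q$ nil-independent derivations of $N$; the asserted inequality then follows immediately from the definition of the maximal number of such derivations. Fix a basis $x_1,\dots,x_s$ of $Q$ with $s=\dim Q$ and put $d_i:=R_{x_i}|_N$. Each $R_{x_i}$ is an inner derivation of $L$, and since $N$ is an ideal we have $R_{x_i}(N)\subseteq N$, so $d_i\in\Der(N)$. As right multiplication is linear in its subscript, $\alpha_1 d_1+\dots+\alpha_s d_s=R_x|_N$ where $x=\alpha_1 x_1+\dots+\alpha_s x_s\in Q$. Thus nil-independence of $d_1,\dots,d_s$ is equivalent to the implication: if $R_x|_N$ is nilpotent for some $x\in Q$, then $x=0$.

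To prove this implication I would argue by maximality of the nilradical. Set $M:=N+\CC x$. Recalling the standard fact that the derived algebra of a solvable Leibniz algebra is nilpotent, hence $[L,L]\subseteq N$, we get $[x,Q]\subseteq N$ and $[Q,x]\subseteq N$; combined with $[N,L]\subseteq N$ and $[L,N]\subseteq N$ this shows that $M$ is a two-sided ideal of $L$ satisfying $M^2\subseteq N$. If $M$ is nilpotent, then it is a nilpotent ideal containing $N$, so $M\subseteq N$, forcing $x\in N\cap Q=\{0\}$ and hence $\alpha_1=\dots=\alpha_s=0$. Everything therefore reduces to showing that $M$ is nilpotent, knowing that $N$ is nilpotent and that $R_x|_N$ is nilpotent.

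This is the step I expect to be the main obstacle. Say $N^{c+1}=0$ and $(R_x|_N)^m=0$. Then $M^\ell$ is spanned by left-normed products $R_{y_\ell}\cdots R_{y_2}(y_1)$ with each $y_i\in N$ or $y_i=x$. The engine of the proof is the commutation rule produced by $R_x\in\Der(N)$: for $n\in N$ one has $R_xR_n=R_nR_x+R_{[n,x]}$ with $[n,x]=R_x(n)\in N$. Repeatedly applying $R_nR_x=R_xR_n-R_{R_x(n)}$ lets me push every factor $R_x$ to the left, so that each resulting summand has the shape $R_x^{\,b}R_{m_1}\cdots R_{m_j}$, where each $m_i$ arises from some $n\in N$ by iterated application of $R_x$. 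Such a summand vanishes if $j\geq c+1$ (nilpotency of $N$), if some $m_i=R_x^{\,k_i}(n)$ with $k_i\geq m$ (nilpotency of $R_x|_N$), or if $b\geq m$ (again nilpotency of $R_x|_N$, since $R_{m_1}\cdots R_{m_j}(y_1)\in N$). The key bookkeeping is that the rewriting preserves both the number of $x$-factors and the number of $N$-factors of a word, which pins down $j$ and bounds $b$ from below; a short count then shows that once $\ell$ exceeds a bound of order $mc$, every summand is killed by one of the three mechanisms, so $M^\ell=0$.

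Alternatively, the same conclusion can be reached more structurally: the relation $[R_x,R_n]=R_{[n,x]}$ shows that $\{R_y|_N:y\in M\}$ is a Lie subalgebra of $\mathrm{End}(N)$ under the commutator, the sorting argument shows that the associative algebra it generates is nilpotent, and therefore every $R_y|_M$ (which sends $M$ into $N$) is a nilpotent operator; Engel's theorem for Leibniz algebras then yields nilpotency of $M$ at once. Either way, the delicate part is confined to establishing the nilpotency of $M$ from the two separate nilpotency hypotheses on $N$ and on $R_x|_N$, the rest being formal.
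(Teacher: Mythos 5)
Your argument is correct, but note that the paper itself does not prove this theorem: it is imported verbatim from \cite{Casas}, so there is no internal proof to compare against. What you have written is essentially the classical Mubarakzjanov-type argument that the cited source adapts to Leibniz algebras: restrict the right multiplications $R_{x_1},\dots,R_{x_s}$ to $N$, and show that if some nontrivial combination $R_x|_N$ ($x\in Q$, $x\neq 0$) were nilpotent, then $M=N+\mathbb{C}x$ would be a nilpotent ideal strictly containing the nilradical, a contradiction. Two points in your write-up deserve care. First, you invoke the fact that $[L,L]\subseteq N$ for a solvable Leibniz algebra over $\mathbb{C}$; this is exactly what makes $M$ a two-sided ideal, and while true in characteristic zero it is itself a nontrivial theorem (the Leibniz analogue of the corollary of Lie's theorem) that should be cited rather than waved at as ``standard.'' Second, in the sorting argument the claim that the rewriting ``preserves the number of $x$-factors'' is not literally true: the substitution $R_nR_x=R_xR_n-R_{R_x(n)}$ trades a standalone factor $R_x$ for an extra application of $R_x$ inside an $N$-factor. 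The correct invariants are the number $j$ of $N$-factors and the total $x$-weight $b+\sum_i k_i$, whose sum $b+j+\sum_i k_i$ equals the original word length $\ell$; with that bookkeeping, if $N^{c+1}=0$ and $(R_x|_N)^m=0$ then any word of length $\ell\geq cm+m$ has either $b\geq m$, or some $k_i\geq m$, or $j\geq c+1$, so every summand vanishes and $M$ is nilpotent as you intend. With those two repairs the proof is complete and agrees with the standard one.
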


\section{Main result}

We denote by $\mathbf{a_k}$ a $k$-dimensional abelian algebra and by $R(\mathbf{a_k}, s)$ the class of solvable Leibniz algebras with a $k$ dimensional nilradical $a_k$ and an $s$-dimensional complementary vector space.

As above, it has been proven that $s\leq k$ for any algebra from the class $R(\mathbf{a_k}, s)$, and in \cite{Ad} the classification of such algebras of $R(\mathbf{a_k}, k)$  is given.
It is proven that an arbitrary algebra of the family $R(\mathbf{a_k}, k )$ can be decomposed into a direct sum of copies of two-dimensional non-trivial solvable Leibniz algebras.

Thus, any algebra $L$ from the family $R(\mathbf{a_k}, k )$ is
 $$L = l_2 \oplus l_2 \oplus \dots \oplus l_2 \oplus r_2 \oplus r_2 \oplus \dots  \oplus r_2,$$
where $l_2 : [e, x] = e$ and $r_2 : [e, x] = e, [x, e] = -e .$

Let $L$ be a Leibniz algebra from the class $R(\mathbf{a_k}, k-1).$ Take a basis
$\{e_1, e_2, \dots, e_k, x_1, x_2,\dots, x_{k-1}\}$ of $L$ such that $e_1, e_2, \dots, e_k$ is a basis of nilradical $N$ and $x_1, x_2,\dots, x_{k-1}$ is a basis of the complementary vector space $Q$. It is known that  the right multiplication operators, $R_{x_1}, R_{x_2}, \dots, R_{x_{k-1}}:N\to N$ are nil-independent derivations and there exists a basis  $\{e_1, e_2, \dots, e_k\}$ of $N$, such that $R_{x_1}, R_{x_2}, \dots, R_{x_{k-1}}$ simultaneously all have Jordan forms.

Because of this, observe that:
$$ \begin{cases}[e_i,x_j]=a_{i,j}e_i+\beta_{i,j}e_{i+1}, & 1 \leq i,j \leq k-1,\\
[e_k,x_j]=a_{k,j}e_k & 1 \leq j \leq k-1,\end{cases}$$
where $a_{i,j}$ are eigenvalues of the operator $R_{x_j}$ and $\beta_{i,j} \in \{0,1\}.$

Since $R_{x_1}, R_{x_2}, \dots, R_{x_{k-1}}$ are nil-independent we have that
$$rank \left(\begin{array}{cccc} a_{1,1}&a_{1,2}&\dots&a_{1,k-1}\\ a_{2,1}&a_{2,2}&\dots&a_{2,k-1}\\
\vdots&\vdots&\vdots&\vdots \\
a_{k-1,1}&a_{k-1,2}&\dots&a_{k-1,k-1}\\
a_{k,1}&a_{k,2}&\dots&a_{k,k-1}\end{array}\right) = k-1.$$

Thus, there exists a minor $M_{1,2,\dots, k-1}^{i_1,i_2\dots, i_{k-1}}$ of order $k-1$ which has a non-zero determinant, i.e. there exists a $t$ such that  $Det(M_{1,2,\dots, k-1}^{1,\dots, t-1, t+1, \dots, k})\neq 0.$
Making the change of basis $$\begin{cases} e'_i=e_i, & 1 \leq i \leq t-1, \\
e_i' = e_{i+1}& t \leq i \leq k-1,\\
e_k'=e_t\end{cases}$$
we get that
$$ \begin{cases}[e_i,x_j]=a_{i,j}e_i+\beta_{i,j}e_{i+1}, & 1 \leq i,j \leq k-1, \ i \neq t-1,\\
[e_{t-1},x_j]=a_{t-1,j}e_{t-1}+\beta_{t-1,j}e_{k}, & 1 \leq j \leq k-1,\\
[e_k,x_j]=a_{k,j}e_k+\beta_{k,j}e_t & 1 \leq j \leq k-1.\end{cases}$$

It should be noted that operators $R_{x_1}, R_{x_2}, \dots, R_{x_{k-1}}$ can be considered linearly nil-independent operators on the quotient vector space $\mathbf{a_k}/\langle e_k\rangle.$ Since $dim(\mathbf{a_k}/\langle e_k\rangle)=k-1$ from the result of \cite{Ad} we obtain that

$$a_{i,i}=1, \quad 1 \leq i \leq k-1, \qquad
a_{i,j}=0, \quad  1 \leq i, j (i\neq j)\leq k-1,$$
$$\beta_{i,j} =0, \quad 1\leq i \ (i\neq t-1)\leq k, \quad 1\leq j\leq k-1.$$

Let us introduce the following notation:
$$[x_i,e_j] = \sum\limits_{p=1}^k\gamma_{i,j}^pe_p, \quad 1 \leq i \leq k-1, \ 1 \leq j \leq k,$$
$$[x_i,x_j] = \sum\limits_{p=1}^k\delta_{i,j}^pe_p, \quad 1 \leq i,j \leq k-1.$$

Using the similar algorithms of the proof of Theorem 3.2 in \cite{Ad} from Leibniz identities and basis changes we obtain that

$$\gamma_{i,j}^p =0, \quad 1 \leq i,j,p \leq k-1, \quad i \neq j \neq p,$$
$$\gamma_{i,i}^i \in \{0, -1\}, \quad 1 \leq i \leq k-1,$$
$$\delta_{i,j}^p=0, \quad  1 \leq i,j,p \leq k-1, $$

Therefore, the multiplication of the $2k-1$-dimensional solvable Leibniz algebra with $k$-dimensional abelian nilradical has the following form:
\begin{equation}\label{eq-main}R(\mathbf{a_k}, k-1) = \left\{\begin{array}{llll}
  [e_i,x_i]=e_i+\beta_{ii}e_k, &  1\le i\leq k-1,\\[1mm]
  [e_i,x_j]= \beta_{ij}e_k, &  1\le i\leq k,&1\le j\leq k-1,& i\not=j,\\[1mm]
  [x_i,e_i]=\alpha_ie_i+\gamma_{i,i}e_k, &  1\le i\leq k-1,\\[1mm]
  [x_i,e_j]=\gamma_{i,j}e_k ,&  1\le i\leq k-1,& 1\le j\leq k-1, & i\not=j,\\[1mm]
   [x_i,e_k]=\sum\limits_{j=1}^k\nu_{i,j}e_j,&  1\le i\leq k-1,\\[1mm]
  [x_i,x_j]=\delta_{i,j}e_k, &  1\le i,j\leq k-1,
\end{array}\right.
\end{equation}
where $\alpha_i \in \{0,\;-1\}.$

First we investigate the case of $\alpha_i=0$ for $1\leq i \leq k-1.$

\begin{thm}\label{thm3.1} Let $L$ be a Leibniz algebra from the class $R(\mathbf{a_k}, k-1)$ and let $\alpha_i=0$ for $1\leq i \leq k-1.$ Then $L$ is isomorphic to one of the following algebras:
$$\begin{array}{ll} L_1(\beta_{i}):\begin{cases}
  [e_i,x_i]=e_i, &  1\le i\leq k-1,\\
  [e_k,x_i]= \beta_{i}e_k, &  1\le i\leq k-1,
\end{cases} & L_2(\beta_{i}):\begin{cases}
  [e_i,x_i]=e_i, &  1\le i\leq k-1,\\
  [e_k,x_i]= \beta_{i}e_k, &  1\le i\leq k-1,\\
    [x_i,e_k]=-\beta_{i}e_k, &  1\le i\leq k-1,
\end{cases} \\ L_3(\beta_i):\begin{cases}
  [e_1,x_1]=e_1+\beta_1e_k, \\
  [e_i,x_i]=e_i, &  2\le i\leq k-1,\\
  [e_1,x_i]= \beta_ie_k, &  2\le i\leq k-1,\\
  [e_k,x_1]=e_k,
\end{cases} &
  L_4(\nu_{i}):\begin{cases}
  [e_i,x_i]=e_i, &  1\le i\leq k-1,\\
  [e_k,x_1]=e_k,\\
  [x_1,e_k]=-e_k,\\
  [x_i,e_k]=\nu_{i}e_1, &  2\leq i\leq k-1,
\end{cases} \\ L_5(\delta_{i,j}):\begin{cases}
  [e_i,x_i]=e_i, &  1\le i\leq k-1,\\
  [x_i,x_j]=\delta_{i,j}e_k, &  1\le i,j\leq k-1.
\end{cases}\end{array}$$
\end{thm}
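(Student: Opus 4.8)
The plan is to begin from the reduced multiplication (\ref{eq-main}) with every $\alpha_i=0$ substituted, so that the only surviving unknowns are the scalars $\beta_{ij}$, $\gamma_{i,j}$, $\nu_{i,j}$ and $\delta_{i,j}$, and then to pin these down by a systematic use of the Leibniz identity followed by a sequence of structure-preserving basis changes. Because the nilradical $N$ is abelian, every instance of $[x,[y,z]]=[[x,y],z]-[[x,z],y]$ on a triple of basis vectors lying entirely in $N$ is automatic, so only the triples containing one or two of the $x_i$ produce information. I would run through these triples in a fixed order and record the resulting polynomial relations among the parameters.

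The identities I expect to do the heavy lifting are those involving $[e_k,x_j]=\beta_{kj}e_k$ and $[x_i,e_k]=\sum_p\nu_{i,p}e_p$. Expanding $[x_i,[e_k,x_j]]=[[x_i,e_k],x_j]-[[x_i,x_j],e_k]$ and comparing coefficients should give $\beta_{kj}\nu_{i,p}=0$ for $p\neq j,k$, together with $\nu_{i,j}(\beta_{kj}-1)=0$ and $\beta_{kj}\nu_{i,k}=\sum_{p}\nu_{i,p}\beta_{pj}$; expanding $[x_i,[e_j,x_l]]$ for $j\neq l$ should produce $\beta_{jl}\nu_{i,p}=0$ and $\beta_{jl}\nu_{i,k}=\gamma_{i,j}\beta_{kl}$; and the general facts that $[x,x]$ and $[x,y]+[y,x]$ lie in $\mathrm{Ann}_r(L)$ further couple the $\delta$'s to the $\nu$'s. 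Together these relations should force all the $\gamma_{i,j}$ and the ``off-index'' components $\nu_{i,p}$ to vanish and confine the genuinely free parameters to a short list.

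The normalization step exploits the residual freedom that preserves the shape of (\ref{eq-main}). The crucial observation is that the substitution $e_i\mapsto e_i+c_ie_k$ replaces $\beta_{ii}$ by $\beta_{ii}+c_i(\beta_{ki}-1)$, so $\beta_{ii}$ can be cleared precisely when $\beta_{ki}\neq 1$; the exceptional value $\beta_{ki}=1$ is exactly what survives in the family $L_3$. Since permuting the pairs $(e_j,x_j)$ for $j\le k-1$ and rescaling $e_k$ also preserve the form, I would reorder so that any index with $\beta_{ki}=1$ becomes $i=1$, and then add suitable elements of $N$ to the $x_i$ to clear whatever $\delta_{i,j}$ and $\nu_{i,p}$ the identities leave adjustable. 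The case split is then governed by the row $(\beta_{k1},\dots,\beta_{k,k-1})$ and by the surviving action of the $x_i$ on $e_k$: when every $\beta_{kj}=0$ and all $\nu_{i,p}=0$ the $\delta_{i,j}$ can no longer be cleared and one obtains $L_5$; when some $\beta_{ki}\neq 1$ is switched on with trivial $\nu$ one obtains $L_1$; when the identities instead force $[x_i,e_k]=-\beta_ie_k$ one obtains $L_2$; when $\beta_{k1}=1$ carries a nonremovable $\beta_{11}$ one obtains $L_3$; and when $\beta_{k1}=1$ together with $[x_1,e_k]=-e_k$ and $[x_i,e_k]=\nu_ie_1$ one obtains $L_4$.

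I expect the main obstacle to be precisely this final case analysis: organizing the Leibniz constraints and the admissible basis changes so that they interlock to give exactly these five normal forms, with no overlaps and no missing cases. In particular, separating $L_1$ from $L_2$ (which amounts to determining the exact value of $[x_i,e_k]$ permitted by the identities) and separating $L_3$ from $L_4$ (according to whether the exceptional index $\beta_{k1}=1$ feeds into the surviving $\beta_{11}$ or into the $e_1$-component of $[x_i,e_k]$) requires checking that the normalizations used in one branch do not reintroduce parameters eliminated in another, and that each listed parameter range is genuinely irredundant.
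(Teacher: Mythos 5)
Your plan follows essentially the same route as the paper's proof: substitute $\alpha_i=0$ into \eqref{eq-main}, extract relations from Leibniz identities on triples containing the $x_i$ (your stated consequences of $[x_i,[e_k,x_j]]=[[x_i,e_k],x_j]-[[x_i,x_j],e_k]$ and of $[x,x],[x,y]+[y,x]\in Ann_r(L)$ are correct and are the ones the paper uses), and normalize via $e_i\mapsto e_i+c_ie_k$ and $x_i\mapsto x_i+c e_k$, with the case split driven by whether $\beta_{k,i}$ lies in $\{0,1\}$ and whether $e_k\in Ann_r(L)$ — exactly the paper's Cases 1--3. The remaining work you defer (interlocking the constraints and normalizations in each branch) is carried out in the paper by the same bookkeeping you describe, so the approach is sound and matches.
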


\begin{proof}
Let $\alpha_i=0$ for $1\leq i \leq k-1,$ then the multiplication \eqref{eq-main} has the form

$$\left\{\begin{array}{llll}
  [e_i,x_i]=e_i+\beta_{i,i}e_k, &  1\le i\leq k-1,\\[1mm]
  [e_i,x_j]= \beta_{i,j}e_k, &  1\le i\leq k, & 1\le j\leq k-1,& i\neq j,\\[1mm]
  [x_i,e_j]=\gamma_{i,j}e_k, &  1\le i\leq k-1,& 1\le j\leq k-1,\\[1mm]
  [x_i,e_k]=\sum\limits_{j=1}^k\nu_{i,j}e_j,&  1\le i\leq k-1,\\[1mm]
  [x_i,x_j]=\delta_{i,j}e_k, &  1\le i,j\leq k-1.
\end{array}\right.$$

\textbf{Case 1.}
Let there exist a $\beta_{k,i}\not=\{0,1\}$. Without loss of generality, we may assume $\beta_{k,1}\not=0$. Changing the basis, let
$$e_1'=e_1-\frac{\beta_{1,1}}{\beta_{k,1}-1}e_k, \qquad e_i'=e_i-\frac{\beta_{i,1}}{\beta_{k,1}}e_k, \quad 2\leq i\leq k-1.$$ Then

$$[e_1',x_1]=[e_1-\frac{\beta_{1,1}}{\beta_{k,1}-1}e_k,x_1]=e_1+\beta_{1,1}e_k-\frac{\beta_{1,1}}{\beta_{k,1}-1}\beta_{k,1}e_k=e_1-\frac{\beta_{1,1}}{\beta_{k,1}-1}e_k=e_1'.$$
$$[e_i',x_1]=[e_i-\frac{\beta_{i,1}}{\beta_{k,1}}e_k,x_1]=\beta_{i,1}e_k-\beta_{i,1}e_k=0,  \quad 2\leq i\leq k-1.$$

So $e_1'x_1=e_1'$ and $e_i'x_1=0$ for $2\leq i\leq k-1$.
Then using the Leibniz identity, we have:
$$0=[e_i,[x_1,x_j]]=[[e_i,x_1]x_j]-[[e_i,x_j],x_1]=\beta_{i,j}[e_k,x_1]=-\beta_{i,j}\beta_{k,1}e_k,$$
$$0=[e_j,[x_1,x_j]]=[[e_j,x_1],x_j]-[[e_j,x_j],x_1]=[e_j+\beta_{j,j}e_k,x_1]=\beta_{j,j}\beta_{k,1}e_k.$$

Hence $\beta_{i,j}\beta_{k,1}=0,$ $\beta_{j,j}\beta_{k,1}=0$. Since $\beta_{k,1}$ is not zero, we know that $$\beta_{i,j}=0, \quad 2\leq i\leq k-1, \quad  2\leq j\leq k-1.$$

Next we consider
$$0=[e_1,[x_1,x_j]=[[e_1,x_1]x_j]-[[e_1,x_j]x_1]=[e_1,x_j]-[\beta_{1,j}e_k,x_1]=\beta_{1,j}e_k-\beta_{1,j}\beta_{k,1}e_k=\beta_{1,j}(1-\beta_{k,1})e_k.$$

Because $\beta_{k,1}\not=1$ we get $\beta_{1,j}=0$ for $2\leq j\leq k-1$.
Thus, our new multiplication is:
$$\left\{\begin{array}{lll}
  [e_i,x_i]=e_i, &  1\le i\leq k-1,\\[1mm]
  [e_k,x_i]= \beta_{k,i}e_k, &  1\le i\leq k-1,\\[1mm]
  [x_i,e_j]=\gamma_{i,j}e_k, &  1\le i\leq k-1,& 1\le j\leq k-1,\\[1mm]
  [x_i,e_k]=\sum\limits_{j=1}^k\nu_{i,j}e_j, &  1\le i\leq k-1,\\[1mm]
  [x_i,x_j]=\delta_{i,j}e_k, &  1\le i,j\leq k-1.
\end{array}\right.$$

Now we consider the Leibniz identity for the triple of elements $\{x_i, e_j, x_1\}$ for $1\leq i\leq k-1$ and $1\leq j\leq k-1.$ Then
$$0=[x_i,[e_1,x_1]]-[[x_i,e_1],x_1]+[x_i,x_1],e_i]=[x_i,e_1]-\gamma_{i,1}[e_k,x_1]=\gamma_{i,1}(1-\beta_{k,1})e_k,$$
$$0=[x_i,[e_j,x_1]-[[x_i,e_j]x_1]+[[x_i,x_1],e_j]=-\gamma_{i,j}[e_k,x_1]=-\gamma_{i,j}\beta_{k,1}e_k, \quad 2\leq j\leq k-1.$$

Since $\beta_{k,1}\neq \{0, 1\},$ we have $\gamma_{i,j}=0$ for $1\leq i,j\leq k-1$.

Using the Leibniz identity, we have:
$$0=[x_i,[e_k,x_1]-[[x_i,e_k],x_1]+[[x_i,x_1],e_k]=\beta_{k,1}[x_i,e_k]-\big[\sum\limits_{j=1}^k\nu_{i,j}e_j,x_1\big]$$
$$=\beta_{k,1}\sum\limits_{j=1}^k\nu_{i,j}e_j-\nu_{i,1}e_1-\nu_{i,k}\beta_{k,1}e_k=(\beta_{k,1}-1)\nu_{i,1}e_1+\beta_{k,1}\nu_{i,2}e_2+\dots+\beta_{k,1}\nu_{i,k-1}e_{k-1}.$$

Hence $\nu_{i,j}=0$ for $1\leq i, j \leq k-1$.

From
$$0=[x_i,[x_j,e_k]]-[[x_i,x_j]e_k]+[[x_i,e_k],x_j]=\nu_{j,k}[x_i,e_k]+\nu_{i,k}[e_k,x_j]=\nu_{i,k}(\nu_{j,k}+\beta_{k,j})e_k,$$
 we get

  \begin{equation}   \label{eq3.1} \nu_{i,k} (\nu_{j,k}+\beta_{k,j})=0, \quad 1\leq i,j\leq k-1. \end{equation}

Taking the basis change $x_i'=x_i-\frac{\delta_{i,1}}{\beta_{k,1}}e_k$ for $2\leq i\leq k-1,$ we obtain $$[x_i',x_1]=[x_i-\frac{\delta_{i,1}}{\beta_{k,1}}e_k,x_1]=\delta_{i,1}e_k-\delta_{i,1}e_k=0, \quad 2\leq i\leq k-1.$$

Thus, we can assume $\delta_{i,1}=0$ for $2\leq i\leq k-1$.

Using the Leibniz identity
$$0=[x_i,[x_1,x_1]]-[[x_i,x_1],x_1]+[[x_i,x_1],x_1]=\delta_{1,1}[x_i,e_k]=\nu_{i,k}\delta_{1,1}e_k$$
we have \begin{equation}\label{eq3.2} \nu_{i,k}\delta_{1,1}=0, \quad 1\leq i\leq k-1.\end{equation}

Another application of the Leibniz identity gives:
$$0=[x_i,[x_j,x_1]]-[[x_i,x_j],x_1]+[[x_i,x_1],x_j]=-\delta_{i,j}\beta_{k,1}e_k,$$
which implies $\delta_{i,j}=0$ for $2\leq i,j\leq k-1$.

From
$$0=[x_1,[x_j,x_i]]-[[x_1,x_j],x_i]+[[x_1,x_1],x_j]=-\delta_{1,j}[e_k,x_1]+\delta_{1,1}[e_k,x_j]=(-\delta_{1,j}\beta_{k,1}+\delta_{1,1}\beta_{k,j})e_k,$$
we have $\delta_{1,j}=\frac{\beta_{k,j}}{\beta_{k,1}}\delta_{1,1}$ for $2\leq j\leq k-1$.

Therefore, we have the following table multiplications:
$$\left\{\begin{array}{lll}
  [e_i,x_i]=e_i, &  1\le i\leq k-1,\\[1mm]
  [e_k,x_i]= \beta_{k,i}e_k, &  1\le i\leq k-1,\\[1mm]
  [x_i,e_k]=\nu_{i,k}e_k, &  1\le i\leq k-1,\\[1mm]
  [x_1,x_j]=\frac{\beta_{k,j}}{\beta_{k,1}}\delta_{1,1}e_k, &  1\le j\leq k-1.
\end{array}\right.$$

Let $\nu_{i,k}=0$ for all $i\;(1 \le i \leq k-1).$
Then taking the change $x_1'=x_1-\frac{\delta_{1,1}}{\beta_{k,1}}e_k$ we have:
$$[x_1',x_1']=[x_1-\frac{\delta_{1,1}}{\beta_{k,1}}e_k, x_1-\frac{\delta_{1,1}}{\beta_{k,1}}e_k]=\delta_{i,i}e_k-\delta_{i,i}e_k=0$$
$$[x_1',x_j]=[x_1-\frac{\delta_{1,1}}{\beta_{k,1}}e_k,x_j]=\frac{\beta_{k,j}}{\beta_{k,1}}\delta_{1,1}e_k-\frac{\delta_{1,1}}{\beta_{k,1}}\beta_{k,j}e_k=0.$$

Thus, we obtain the algebra $L_1(\beta_{i}),$ with $\beta_{1}\neq 0,1.$

Let there exist $i\;(1 \le i \leq k-1)$ such that $\nu_{i,k}\neq0$. According to the equalities \eqref{eq3.1} and \eqref{eq3.2} we have  $\nu_{i,k}=-\beta_{k,i}$ and $\delta_{1,1}=0$, which implies that $\delta_{1,j}=0$ for $2\leq j\leq k-1$.
Thus we have the algebra $L_2(\beta_{i}),$ with $\beta_{1}\neq 0,1.$

\textbf{Case 2.}
Let $\beta_{k,1}, \beta_{k,2},\dots,\beta_{k,k-1} \in \{0,1\}$. Assume $(\beta_{k,1},\beta_{k,2},...,\beta_{k,k-1})\neq(0,0,...,0).$ Without loss of generality, rearrange the basis elements such that the non-zero $\beta_{k,i}$ are the first $s$, where $1\leq s\leq k-1$. So we have
$\beta_{k,1}=\beta_{k,2}=\dots=\beta_{k,s}=1$ and $\beta_{k,s+1}=\beta_{k,s+2}=\dots=\beta_{k,k-1}=0$.

In this case the table of multiplications is
$$\left\{\begin{array}{llll}
  [e_i,x_i]=e_i+\beta_{i,i}e_k, &  1\le i\leq k-1,\\[1mm]
  [e_i,x_j]= \beta_{i,j}e_k, &  1\le i\leq k-1,& 1\le j\leq k-1,&  i\not=j,\\[1mm]
  [e_k,x_j]=e_k, &  1\le i\leq s,\\[1mm]
  [x_i,e_j]=\gamma_{i,j}e_k, &  1\le i\leq k-1,& 1\le j\leq k-1,\\[1mm]
  [x_i,e_k]=\sum\limits_{j=1}^k\nu_{i,j}e_j , &  1\le i\leq k-1,\\[1mm]
  [x_i,x_j]=\delta_{i,j}e_k, &  1\le i,j\leq k-1.
\end{array}\right.$$

Changing the basis, let $e_i'=e_i-\beta_{i,1}e_k$
for $2\leq i\leq k-1$ we have
$$[e_i',x_1]=[e_i-\beta_{i,1}e_k,x_1]=\beta_{i,1}e_k-\beta_{i,1}e_k=0, \quad 2\leq i\leq k-1.$$

Using the Leibniz identities
$$0=[e_i,[x_i,x_1]]-[[e_i,x_i]x_1]+[[e_i,x_1],x_i]=-[e_i+\beta_{i,i}e_k,x_1]+\beta_{i,1}[e_k,x_i]=-\beta_{i,i}e_k,$$
$$0=[e_i,[x_j,x_1]]-[[e_i,x_j],x_1]+[[e_i,x_1],x_j]=-\beta_{i,j}e_k,$$
we have $\beta_{i,j}=0$ for $2\leq i,j\leq k-1$.

Next, from
$$0=[x_i,[e_i,x_1]]-[[x_i,e_i],x_1]+[[x_i,x_1],e_i]=-\gamma_{i,i}[e_k,x_1]=-\gamma_{i,i}e_k,$$
$$0=[x_i,[e_j,x_1]]-[[x_i,e_j],x_1]+[[x_i,x_1],e_j]=-\gamma_{i,j}[e_k,x_1]=-\gamma_{i,j}e_k,$$
we obtain $\gamma_{i,j}=0$ for $1\leq i\leq k-1$, $2\leq j\leq k-1$.

First, consider:
$$0=[x_1,[e_k,x_1]]-[[x_1,e_k],x_1]+[[x_1,x_1],e_k]=[x_1,e_k]-\big[\sum\limits_{j=1}^k\nu_{1,j}e_j,x_1\big]=$$
$$=\sum\limits_{j=1}^k\nu_{1,j}e_j -\nu_{1,1}(e_1+ \beta_{1,1}e_k)- \nu_{1,k}e_k  = \nu_{1,2}e_2+\dots+\nu_{1,k-1}e_{k-1} -\nu_{1,1}\beta_{1,1}e_k.$$

Hence $$\nu_{1,1}\beta_{1,1}=0, \qquad \nu_{1,i}=0,\quad 2\le i \le k-1.$$

Now consider the Leibniz identity for the triples $\{x_i, e_k, x_i\}$ and $\{x_i, e_k, x_1\}.$

For $2\leq i\leq s,$ we have
$$0=[x_i,[e_k,x_i]]-[[x_i,e_k],x_i]+[[x_i,x_i],e_k]=\sum\limits_{j=1}^k\nu_{i,j}e_j-\big[\sum\limits_{j=1}^k\nu_{i,j}e_j,x_i\big]$$
$$=\sum\limits_{j=1}^k\nu_{i,j}e_j-\nu_{i,1}\beta_{1,i}e_k-\nu_{i,i}e_i-\nu_{i,k}e_k=\sum\limits_{j=1, j \neq i}^{k-1}\nu_{i,j}e_j-\nu_{i,1}\beta_{1,i}e_k,$$
which implies
$$\nu_{i,1} =  \dots = \nu_{i,i-1} = \nu_{i,i+1}=  \dots = \nu_{i,k-1} =0.$$

Next from $$0=[x_i,[e_k,x_1]]-[[x_i,e_k],x_1]+[[x_i,x_1],e_k]=\beta_{k,1}[x_i,e_k]-[\nu_{i,i}e_i+\nu_{i,k}e_k,x_1]=\nu_{i,i}e_i,$$
we get $\nu_{i,i}=0$ for $2\leq i\leq s$.
Thus, we obtain $x_ie_k=\nu_{i,k}e_k$ for $2\leq i\leq s$.

For $s+1\leq i\leq k-1$, we have
$$0=[x_i,[e_k,x_i]]-[[x_i,e_k],x_i]+[x_i,x_i],e_k]=-\big[\sum\limits_{j=1}^k\nu_{i,j}e_j,x_i\big]=-\nu_{i,1}\beta_{1,i}e_k-\nu_{i,i}e_i,$$
$$0=[x_i,[e_k,x_1]]-[[x_i,e_k],x_1]+[[x_i,x_1],e_k]=[x_i,e_k]-\big[\sum\limits_{j=1}^k\nu_{i,j}e_j,x_1\big]= \sum\limits_{j=2}^{k-1}\nu_{i,j}e_j-\nu_{i,1}\beta_{1,1}e_k.$$
which implies  $$\nu_{i,1}\beta_{1,i}=0 \quad \nu_{i,1}\beta_{1,1}=0, \quad \nu_{i,j}=0, \quad s<i\leq k-1, \quad 2\leq j\leq k-1.$$

Hence $[x_i,e_k]=\nu_{i,1}e_1+\nu_{i,k}e_k$ for $s+1\leq i\leq k-1$.

With the basis change $x_i'=x_i-\delta_{i,1}e_k$ for $2 \leq i \leq k-1$ we have:
$$[x_i',x_1]=[x_i-\delta_{i,1}e_k,x_1]=0.$$

Then, from
$$0=[x_i,[x_j,x_1]]-[[x_i,x_j],x_1]+[[x_i,x_1],x_j]=\delta_{i,j}[e_k,x_1]=-\delta_{i,j}e_k,$$
we obtain $\delta_{i,j}=0$ for $2\leq i,j\leq k-1$.

Therefore, the table of multiplications is:
$$\begin{cases}
  [e_1,x_1]=e_1+\beta_{1,1}e_k, \\
  [e_i,x_i]=e_i, &  2\le i\leq k-1,\\
  [e_1,x_j]= \beta_{1,j}e_k, &  2\le j\leq k-1,\\
  [e_k,x_j]=e_k, &  1\le j\leq s,\\
  [x_i,e_1]=\gamma_{i,1}e_k, &  1\le i\leq k-1,\\
  [x_1,e_k]=\nu_{1,1}e_1+\nu_{1,k}e_k,\\
  [x_i,e_k]=\nu_{i,k}e_k, &  2\le i\leq s,\\
  [x_i,e_k]=\nu_{i,1}e_1+\nu_{i,k}e_k, &  s+1\leq i\leq k-1,\\
  [x_1,x_j]=\delta_{1,j}e_k, &  1\le j\leq k-1,
\end{cases}$$
where
$\nu_{1,1}\beta_{1,1}=0,$
$\nu_{i,1}\beta_{1,1}=0,$ for $s<i\leq k-1.$

\textbf{Case 2.1.} Let $s\geq2,$ then using the Leibniz identities
$$0=[e_1,[x_1,x_2]]-[[e_1,x_1],x_2]+[[e_1,x_2],x_1]=-[e_1+\beta_{1,1}e_k,x_2]+\beta_{1,2}[e_k,x_1]=\beta_{1,1}e_k,$$
$$0=[e_1,[x_i,x_2]]-[[e_1,x_i],x_2]+[[e_1,x_2],x_i]=-\beta_{1,i}[e_k,x_2]+\beta_{1,2}[e_k,x_i]=(-\beta_{1,i}+\beta_{1,2})e_k, \quad 2\leq i\leq s,$$
$$0=[e_1,[x_j,x_2]-[[e_1,x_j],x_2]+[[e_1,x_2],x_j]=-\beta_{1,j}[e_k,x_2]+\beta_{1,2}[e_k,x_j]=-\beta_{1,j}e_k, \quad s+1\leq j\leq k-1,$$
$$0=[x_1,[e_k,x_2]-[[x_1,e_k],x_2]+[[x_1,x_2],e_k]=[x_1,e_k]-[\nu_{1,1}e_1+\nu_{1,k}e_k,x_2]= \nu_{1,1}e_1,$$
$$0=[x_i,[e_k,x_2]]-[[x_i,e_k],x_2]+[[x_i,x_2],e_k]=[x_i,e_k]-[\nu_{i,1}e_1+\nu_{i,k}e_k,x_2]=\nu_{i,1}e_1,\quad s+1 \leq i \leq k-1,$$
$$0=[x_i,[e_1,x_2]]-[[x_i,e_1],x_2]+[[x_i,x_2],e_1]=\gamma_{i,1}[e_k,x_2]=\gamma_{i,1}e_k, \quad 1 \leq i \leq k-1,$$
we obtain

$$\begin{array}{ccccc}\beta_{1,1}=0, & \beta_{1,i}=\beta_{1,2}, & 2\leq i\leq s, & \beta_{1,j}=0, & s+1\leq j\leq k-1,\\[1mm]
\nu_{1,1}=0, & \nu_{i,1}=0,& s+1 \leq i \leq k-1, & \gamma_{i,1}=0, & 1 \leq i \leq k-1.\end{array}$$

Making the basis change $e_1'=e_1-\beta_{1,2}e_k$ we obtain $e_1'x_i=0,$  $2\leq i\leq s.$
Thus we have:
$$\begin{cases}
  [e_i,x_i]=e_i, &  1\le i\leq k-1,\\
  [e_k,x_j]=e_k, &  1\le j\leq s,\\
  [x_i,e_k]=\nu_{i}e_k, &  1\le i\leq k-1,\\
  [x_1,x_j]=\delta_{1,j}e_k, &  1\le j\leq k-1.
\end{cases}$$

\textbf{Case 2.1.1.}
Let $e_k\in  Ann_r(L),$ then $\nu_{i}=0$.
Making the change  $x_1'=x_1-\delta_{1,1}e_k,$ we may assume that  $\delta_{1,1}=0.$
Then from the Leibniz identity
$$0=[x_1,[x_j,x_1]]-[[x_1,x_j],x_1]-[[x_1,x_1],x_j]=-\delta_{1,j}[e_k,x_1]=-\delta_{1,j}e_k, \quad 2\leq j\leq k-1,$$
we get $\delta_{1,j}=0$ for $2\leq j\leq k-1$ and we obtain the algebra $$L_1(\beta_i), \quad  \text{ with }  \beta_i=1  \text{ for }  1 \leq i \leq s \quad \text{and} \quad \beta_i=0 \text{ for }  s+1 \leq i \leq k-1.$$

\textbf{Case 2.1.2.}
Let $e_k\not\in  Ann_r(L)$. Since the following elements
$$\begin{array}{lc} [x_1,x_i]+[x_i,x_1]=\delta_{1,i}e_k, & [x_1,x_1]=\delta_{1,1}e_k,\\[1mm]
[x_i,e_k]+[e_k,x_i]=(\nu_{i}+1)e_k, &  1\leq i\leq s,\\[1mm]
[x_i,e_k]+[e_k,x_i]=\nu_{i}e_k, &  s+1\leq i\leq k-1,\end{array}$$
belong to the right annihilator, we deduce
$$\delta_{1,i}=0, \quad 1 \leq i \leq k-1,\qquad \nu_{i}=-1, \quad 1\leq i\leq s, \qquad \nu_{i}=0, \quad s+1\leq i\leq k-1.$$

Thus, in this case we obtain the algebra
$$L_2(\beta_i), \quad  \text{ with }  \beta_i=1  \text{ for }  1 \leq i \leq s \quad \text{and} \quad \beta_i=0 \text{ for }  s+1 \leq i \leq k-1.$$

\textbf{Case 2.2.}
Let $s=1$, then we have the following table of multiplication
$$\begin{cases}
  [e_1,x_1]=e_1+\beta_{1,1}e_k, \\
  [e_i,x_i]=e_i, &  2\le i\leq k-1,\\
  [e_1,x_j]= \beta_{1,j}e_k, &  2\le j\leq k-1,\\
  [e_k,x_1]=e_k,\\
  [x_i,e_1]=\gamma_{i,1}e_k, &  1\le i\leq k-1,\\
  [x_i,e_k]=\nu_{i,1}e_1+\nu_{i,k}e_k, &  1\leq i\leq k-1,\\
  [x_1,x_j]=\delta_{1,j}e_k, &  1\le j\leq k-1,
\end{cases}$$
where $\nu_{1,1}\beta_{1,1}=0,$
$\nu_{i,1}\beta_{1,1}=0,$ for $s<i\leq k-1.$

\textbf{Case 2.2.1.}
Let $e_k\in  Ann_r(L)$. Then $\nu_{i,1}=\nu_{i,k}=0.$
Next, $[x_1,e_1]+[e_1,x_1]=e_1+(\beta_{1,1}+\gamma_{1,1})e_k\in  Ann_r(L)$. Since $e_k\in  Ann_r(L)$ we have $e_1\in  Ann_r(L)$. Hence $\gamma_{i,1}=0.$

Performing a basis change $x_1'=x_1-\delta_{1,1}e_k$ we get $[x_1',x_1']=[x_1-\delta_{1,1}e_k, x_1-\delta_{1,1}e_k]=0.$
Thus, we may assume $\delta_{1,1}=0.$ Using the Leibniz identity:
$$0=[x_1,[x_j,x_1]]-[[x_1,x_j],x_1]+[[x_1,x_1],x_j]=\delta_{1,j}e_k,$$
we derive $\delta_{1,j}=0$ for $2\leq j\leq k-1,$ from which we obtain the algebra $L_3(\beta_{i}).$

\textbf{Case 2.2.2.}
Let $e_k\not\in  Ann_r(L)$. Then since the following elements $$[e_1,x_j]+[x_j,e_1]=(\beta_{1,j}+\gamma_{j,1})e_k, \quad [x_1,x_1]=\delta_{1,1}e_k, \quad [x_1,x_j]+[x_j,x_1]=\delta_{1,j}e_k$$
belong to the right annihilator, we deduce
$$\gamma_{j,1}=-\beta_{1,j}\quad 2\leq j\leq k-1, \qquad \delta_{1,j}=0, \quad 1\leq j\leq k-1.$$

From the Leibniz identities
$$0=[x_i,[e_1,x_1]-[[x_i,e_1],x_1]+[[x_i,x_1],e_1]=-\beta_{1,i}e_k+\beta_{1,1}(\nu_{i,1}e_1+\nu_{i,k}e_k)+\beta_{1,i}e_k=\beta_{1,1}(\nu_{i,1}e_1+\nu_{i,k}e_k),$$
$$0=[x_i,[e_1,x_j]]-[[x_i,e_1],x_j]+[[x_i,x_j],e_1]=\beta_{1,j}(\nu_{i,1}e_1+\nu_{i,k}e_k), \quad 2 \leq j \leq k-1,$$
we obtain $$\beta_{1,j}\nu_{i,1}=0, \quad \beta_{1,j}\nu_{i,k}=0, \quad 1\leq i\leq k-1, \  1\leq j\leq k-1.$$

Suppose there exists a $j$ such that $\beta_{1,j}\not=0$. Then $\nu_{i,1}=\nu_{i,k}=0$ for $1\leq i\leq k-1$ which implies that $e_k\in  Ann_r(L)$. This contradicts the assumption that $e_k\not\in  Ann_r(L)$. Therefore, $\beta_{1,j}=0$ for $1\leq j\leq k-1$. Thus we have the multiplication:
$$\begin{cases}
  [e_i,x_i]=e_i, &  1\le i\leq k-1,\\
  [e_k,x_1]=e_k,\\
  [x_1,e_1]=\gamma_{1,1}e_k, \\
  [x_i,e_k]=\nu_{i,1}e_1+\nu_{i,k}e_k, &  1\leq i\leq k-1.
\end{cases}$$
Using the Leibniz identity for $2\leq i\leq k-1$, we get:
$$0=[x_i,[x_i,e_k]]-[[x_i,x_i],e_k]+[[x_i,e_k],x_i]=[x_i,\nu_{i,1}e_1+\nu_{i,k}e_k]+[\nu_{i,1}e_1+\nu_{i,k}e_k,x_i]=\nu_{i,k}(\nu_{i,1}e_1+\nu_{i,k}e_k).$$
This implies that $\nu_{i,k}=0$ for $2\leq i\leq k$.

We also have:
$$0=[x_1,[x_1,e_1]]-[[x_1,x_1],e_1]+[[x_1,e_1],x_1]=\gamma_{1,1}\nu_{1,1}e_1+\gamma_{1,1}(1+\nu_{1,k})e_k,$$
$$0=[x_1,[x_1,e_k]]-[[x_1,x_1],e_k]+[x_1,e_k],x_1]=\nu_{1,1}(\nu_{1,k}+1)e_1+(\nu_{1,1}\gamma_{1,1}+\nu_{1,k}(\nu_{1,k}+1))e_k,$$
$$0=[x_1,[x_i,e_k]]-[[x_1,x_i],e_k]+[[x_1,e_k],x_i]=\nu_{i,1}\gamma_{1,1}e_k,$$
$$0=[x_i,[x_1,e_k]]-[[x_i,x_1],e_k]+[[x_i,e_k],x_1]=\nu_{i,1}(\nu_{i,k}+1)e_1.$$

Thus, $$\begin{array}{lll}\gamma_{1,1}\nu_{1,1}=0, & \gamma_{1,1}(1+\nu_{1,k})=0,\\
\nu_{1,1}(\nu_{1,k}+1)=0, & \nu_{1,k}(\nu_{1,k}+1)=0,\\
\nu_{i,1}\gamma_{1,1}=0, & \nu_{i,1}(\nu_{1,k}+1)=0, & 2\leq i\leq k-1.\end{array}$$

If $\nu_{1,k}=0$, then $\gamma_{1,1}=\nu_{i,1}=0$ for $1\leq i\leq k-1,$ which implies that $e_k\in Ann_r(L).$
This is a contradiction with assumption that $e_k\notin Ann_r(L).$ Thus, $\nu_{1,k}=-1.$

If $\gamma_{1,1}\not=0$, then $\nu_{i,1}=0$ for $1\leq i\leq k-1$. The multiplication is as follows:
$$\begin{cases}
  [e_i,x_i]=e_i, &  1\le i\leq k-1,\\
  [e_k,x_1]=e_k,\\
  [x_1,e_1]=\gamma_{1,1}e_k, \\
  [x_1,e_k]=-e_k.
\end{cases}$$
Making the basis change $e_1'=e_1+\gamma_{1,1}e_k$ we have the algebra
$$L_1(\beta_i), \quad  \text{ with }  \beta_1=1  \quad \text{and} \quad \beta_i=0 \text{ for }  2 \leq i \leq k-1.$$

If $\gamma_{1,1}=0$, then we obtain the algebra $L_4(\nu_i).$

\textbf{Case 3.}
Let $\beta_{k,1}=\beta_{k,2}=\dots=\beta_{k,k-1}=0$.
Then the multiplication is:
$$\begin{cases}
  [e_i,x_i]=e_i+\beta_{i,i}e_k, &  1\le i\leq k-1,\\
  [e_i,x_j]= \beta_{i,j}e_k, &  1\le i,j\leq k-1,\; i\not=j,\\
  [x_i,e_j]=\gamma_{i,j}e_k, &  1\le i,j\leq k-1,\\
  [x_i,e_k]=  \sum\limits_{j=1}^k\nu_{i,j}e_j, &  1\le i\leq k-1,\\
  [x_i,x_j]=\delta_{i,j}e_k, &  1\le i,j\leq k-1.
\end{cases}$$

From the Leibniz identity
$$0=[e_i,[x_j,x_i]]-[[e_i,x_j],x_i]+[[e_i,x_i],x_j]=\beta_{i,j}e_k,$$
we have  $$\beta_{i,j}=0, \quad 1\leq i, j\leq k-1,\quad i\not=j.$$

Then making the change $e_i'=e_i+\beta_{i,i}e_k$ for $1\leq i\leq k-1,$ we get $$[e_i',x_i]=[e_i+\beta_{i,i}e_k, x_i]=e_i'.$$

Thus, we may suppose $\beta_{i,i}=0$ for
$1 \leq i \leq k-1.$

Notice that
$$0=[x_i,[e_k,x_j]]-[[x_i,e_k],x_j]+[[x_i,x_j],e_k]=-\nu_{i,j}e_j$$
implies that $\nu_{i,j}=0$ for $1\leq i,j\leq k-1$. This gives $[x_i,e_k]=\nu_{i,k}e_k$ for $1\leq i\leq k-1$.

Using the Leibniz identity, we have:
$$0=[x_j,[e_i,x_i]]-[[x_j,e_i],x_i]+[[x_j,x_i],e_i]=\gamma_{i,j}e_k,$$
$$0=[x_i,[x_i,e_k]]-[[x_i,x_i],e_k]+[[x_i,e_k],x_i]=\nu_{i,k}^2e_k,$$
which implies that $\gamma_{i,j}=\nu_{i,k}=0$ for $1\leq i,j\leq k-1$.

Our final multiplication is:
$$L_5(\delta_{i,j}):\begin{cases}
  [e_i,x_i]=e_i, &  1\le i\leq k-1,\\
  [x_i,x_j]=\delta_{i,j}e_k, &  1\le i,j\leq k-1.
\end{cases}$$
\end{proof}

Now we give the description of solvable algebras $R(\mathbf{a_k}, k-1)$ in the case of $\alpha_i=-1$ for $1 \leq i \leq k-1.$

\begin{thm} Let $L$ be a solvable Leibniz algebra from the class $R(\mathbf{a_k}, k-1)$ and $\alpha_i=-1$ for $1 \leq i \leq k-1.$ Then $L$ is isomorphic to one of the following algebras:
$$
\begin{array}{ll}L_{6}(\beta_j):\begin{cases}
  [e_i,x_i]=e_i, &  1\le i\leq k-1,\\
  [e_k,x_j]= \beta_{j}e_k, &  1\le j\leq k-1,\\
  [x_i,e_i]=-e_i, &  1\le i\leq k-1,
\end{cases} & L_{7}(\beta_j):\begin{cases}
  [e_i,x_i]=e_i, &  1\le i\leq k-1,\\
  [e_k,x_j]= \beta_{j}e_k, &  1\le j\leq k-1,\\
  [x_i,e_i]=-e_i, &  1\le i\leq k-1,\\
  [x_j,e_k]= -\beta_{j}e_k, &  1\le j\leq k-1,
\end{cases} \\ L_{8}(\gamma_{i}):\begin{cases}
  [e_i,x_i]=e_i, &  1\le i\leq k-1,\\
  [e_k,x_1]=e_k, \\
  [x_i,e_i]=-e_i, &  1\le i\leq k-1,\\
  [x_i,e_1]=\gamma_{i}e_k, &  1\le i\leq k-1,
\end{cases} & L_{9}: \begin{cases}
  [e_1,x_1]=e_1 + \beta_{1}e_k,\\
  [e_i,x_i]=e_i, & 2\le i\leq k-1,\\
  [e_1,x_i]=\beta_ie_k, & 2\le i\leq k-1,\\
  [e_k,x_1]=e_k, & \\
  [x_1,e_1]=-e_1-\beta_{1}e_k,  &\\
  [x_i,e_i]=-e_i,  & 2\le i\leq k-1,\\
  [x_i,e_1]=-\beta_{i}e_k, & 1\le i\leq k-1,\\
  [x_1,e_k]=-e_k.
\end{cases}\\ L_{10}(\delta_{i,j}):\begin{cases}
  [e_i,x_i]=e_i, &  1\le i\leq k-1,\\
  [x_i,e_i]=-e_i, &  1\le i\leq k-1,\\
  [x_i,x_j]=\delta_{i,j}e_k, &  1\le i,j\leq k-1.
\end{cases}\end{array}$$

\end{thm}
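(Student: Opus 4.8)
The plan is to mirror the proof of Theorem~\ref{thm3.1} line by line, replacing $\alpha_i=0$ with $\alpha_i=-1$ throughout. First I would substitute $\alpha_i=-1$ into \eqref{eq-main}, so that the starting table differs from the one in Theorem~\ref{thm3.1} only through the new left products $[x_i,e_i]=-e_i+\gamma_{i,i}e_k$. The whole argument then splits along the same three cases according to the tuple $(\beta_{k,1},\dots,\beta_{k,k-1})$: Case~1, some $\beta_{k,i}\notin\{0,1\}$; Case~2, all $\beta_{k,i}\in\{0,1\}$ but not all zero, with subcases $s\ge 2$ and $s=1$, each further split by whether $e_k\in Ann_r(L)$; and Case~3, all $\beta_{k,i}=0$. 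In each case I would run the same family of Leibniz identities used before, on the triples $\{e_i,x_1,x_j\}$, $\{x_i,e_j,x_1\}$, $\{x_i,e_k,x_1\}$, $\{x_i,x_j,x_1\}$ and their self-analogues, together with the normalizing basis changes $e_i'=e_i-c\,e_k$ and $x_i'=x_i-c\,e_k$ that kill the off-diagonal $\beta_{i,j}$, $\gamma_{i,j}$, $\nu_{i,j}$ (for $j<k$) and $\delta_{i,j}$ parameters.

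I expect the two extremal cases to be routine parallels. Case~1 should yield $L_6$ (when $\nu_{i,k}=0$ for all $i$) and $L_7$ (when some $\nu_{i,k}\neq0$), paralleling $L_1,L_2$, and Case~3 should yield $L_{10}$ exactly as Case~3 of Theorem~\ref{thm3.1} produced $L_5$, the only change being that the defining products $[x_i,e_i]=-e_i$ now survive while $\gamma_{i,i}$ is absorbed by a change $e_i'=e_i+c\,e_k$. In Case~2 the $s\ge2$ and $s=1$ branches should, as before, split further according to $e_k\in Ann_r(L)$, producing $L_6,L_7$ and then $L_8,L_9$ respectively.

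The genuinely new bookkeeping is twofold. First, since $[x_i,e_i]=-e_i+\gamma_{i,i}e_k$ now has a nonzero $e_i$-part, any identity in which $[x_i,e_i]$ is re-bracketed (for example $\{x_i,e_i,x_1\}$) acquires an extra $-[e_i,\,\cdot\,]$ contribution absent in Theorem~\ref{thm3.1}, which couples the $\beta_{i,j}$ and $\gamma_{i,j}$ relations differently. Second, the combination $[x_i,e_i]+[e_i,x_i]$ now equals $(\beta_{i,i}+\gamma_{i,i})e_k$ rather than $e_i+(\cdots)e_k$, so it lies in $Ann_r(L)$ automatically and no longer forces $e_i\in Ann_r(L)$; the annihilator shortcut of Theorem~\ref{thm3.1} is thus unavailable and must be replaced by direct use of Leibniz identities. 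I expect these two effects to interchange the roles of the $\beta$- and $\gamma$-parameters, so that in the $e_k\in Ann_r(L)$, $s=1$ subcase the surviving parameters are the $\gamma_{i,1}$ (yielding $L_8$) instead of the $\beta_{1,j}$ that appeared in $L_3$.

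The hardest part will be the $s=1$, $e_k\notin Ann_r(L)$ subcase producing $L_9$. Here the $-e_i$ terms turn the self- and mixed-identities $\{x_1,x_1,e_1\}$, $\{x_1,x_1,e_k\}$, $\{x_1,x_i,e_k\}$, $\{x_i,x_1,e_k\}$ into a coupled quadratic system relating $\gamma_{1,1}$, $\nu_{1,1}$, $\nu_{1,k}$ and the $\nu_{i,1}$ (of idempotent type in $\nu_{1,k}$, with cross terms such as $\gamma_{1,1}\nu_{1,1}$). I would solve this system, rule out the branch that would place $e_k$ in $Ann_r(L)$ in order to fix $\nu_{1,k}$, and then split on $\gamma_{1,1}$: a nonzero $\gamma_{1,1}$ collapses the algebra to an $L_6$-type one after $e_1'=e_1+\gamma_{1,1}e_k$, while $\gamma_{1,1}=0$ leads to the normal form $L_9$ with its genuine parameters $\beta_i$. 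A concluding verification that $L_6,\dots,L_{10}$ are pairwise non-isomorphic and that the stated parameter ranges are canonical will complete the classification.
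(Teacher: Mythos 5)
Your proposal follows exactly the route the paper intends: its entire proof of this theorem is the single sentence that it is ``similar to the proof of Theorem~\ref{thm3.1}'', and your plan is precisely that adaptation, correctly identifying the two genuine points of divergence (the extra $-e_i$ contributions when $[x_i,e_i]$ is re-bracketed, and the failure of the $[x_i,e_i]+[e_i,x_i]\in Ann_r(L)$ shortcut) and correctly anticipating the resulting output list $L_6$--$L_{10}$, including the $\beta$/$\gamma$ role swap in $L_8$ and the absence of an $L_4$-analogue. This matches the paper's approach; no discrepancy to report.
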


\begin{proof}
The proof is similar to the proof of the Theorem \ref{thm3.1}
\end{proof}

Now we give the description of solvable Leibniz algebras from the class $R(\mathbf{a_k}, k-1)$ in the general case.
Let there exist $i$ and $j$ such that $\alpha_i=-1$ and $\alpha_j=0.$ Without loss of generality we can assume that $\alpha_1=\dots =\alpha_{t-1}=-1$ and
$\alpha_t=\dots =\alpha_{k-1}=0.$

\begin{thm}\label{thm3.3} Let $L$ be a solvable Leibniz algebra from the class $R(\mathbf{a_k}, k-1)$ and let  $\alpha_1=\dots =\alpha_{t-1}=-1$ and
$\alpha_t=\dots =\alpha_{k-1}=0.$  Then $L$ is isomorphic to one of the following algebras:
$$M_{1, t}(\beta_{i}):\begin{cases}
  [e_i,x_i]=e_i, &  1\le i\leq k-1,\\
  [e_k,x_i]= \beta_{i}e_k, &  1\le i\leq k-1,\\
  [x_i,e_i]=-e_i, &  1\le i\leq t-1,
\end{cases} \quad M_{2,t}(\beta_{i}):\begin{cases}
  [e_i,x_i]=e_i, &  1\le i\leq k-1,\\
  [e_k,x_i]= \beta_{i}e_k, &  1\le i\leq k-1,\\
  [x_i,e_k]= -\beta_{i}e_k, &  1\le i\leq k-1,\\
  [x_i,e_i]=-e_i, &  1\le i\leq t-1,
\end{cases} $$
$$M_{3,t}(\beta_{i}):\begin{cases}
  [e_{t},x_{t}]=e_{t} + \beta_{t}e_k,\\
  [e_i,x_i]=e_i, &  1\le i\leq k-1, \\
  [e_{t},x_i]=\beta_ie_k, &  1\le i\leq k-1, \\
  [e_k,x_{t}]= e_k,\\
  [x_i,e_i]=-e_i, &  1\le i\leq t-1,
\end{cases} \quad M_{4,t}(\beta_{i}):\begin{cases}
  [e_1,x_1]=e_1 + \beta_{1}e_k,\\
  [e_i,x_i]=e_i, &  2\le i\leq k-1,\\
  [e_1,x_i]=\beta_ie_k, &  2\le i\leq k-1,\\
  [e_k,x_1]= e_k,\\
  [x_1,e_1]=-e_1-\beta_1e_k,& \\
  [x_i,e_i]=-e_i, &  2\le i\leq t-1,\\
  [x_i,e_1]=-\beta_ie_k, &  2\le i\leq k-1,\\
  [x_1,e_k]=-e_k,
\end{cases} $$
$$M_{5,t}(\gamma_{i}):\begin{cases}
    [e_i,x_i]=e_i, &  1\le i\leq k-1,\\
  [e_k,x_1]= e_k,\\
  [x_i,e_i]=-e_i, &  1\le i\leq t-1,\\
    [x_i,e_1]=\gamma_ie_k, &  2\le i\leq k-1,
\end{cases} \quad M_{6,t}(\nu_{i}):\begin{cases}
  [e_i,x_i]=e_i, &  1\le i\leq k-1,\\
  [e_k,x_t]= e_k,\\
  [x_i,e_i]=-e_i, &  1\le i\leq t-1,\\
  [x_t,e_k]=-e_k,\\
  [x_i,e_k]=\nu_ie_t, &  1\le i\leq k-1,
\end{cases} $$
$$M_{7,t}(\delta_{i,j}):\begin{cases}
  [e_i,x_i]=e_i, &  1\le i\leq k-1,\\
  [x_i,e_i]=-e_i, &  1\le i\leq t-1,\\
  [x_i,x_j]=\delta_{i,j}e_k, &  1\le i,j\leq k-1.
\end{cases}$$

\end{thm}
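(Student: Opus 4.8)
The plan is to run the same machinery as in the proof of Theorem \ref{thm3.1}, starting now from the specialization of \eqref{eq-main} in which $[x_i,e_i]=-e_i+\gamma_{i,i}e_k$ for $1\le i\le t-1$ and $[x_i,e_i]=\gamma_{i,i}e_k$ for $t\le i\le k-1$, all other products keeping the shape recorded in \eqref{eq-main}. As there, the argument is organized according to the scalars $\beta_{k,1},\dots,\beta_{k,k-1}$ occurring in $[e_k,x_j]=\beta_{k,j}e_k$: Case 1 where some $\beta_{k,j}\notin\{0,1\}$, Case 2 where all $\beta_{k,j}\in\{0,1\}$ and not all vanish, and Case 3 where all $\beta_{k,j}=0$. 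The only structural novelty compared with Theorem \ref{thm3.1} is the presence of the terms $-e_i$ in $[x_i,e_i]$ for $i<t$; these leave the elimination scheme intact but change the right-hand sides of several Leibniz identities, so I would recompute the affected triples by hand.

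In each case I would evaluate the Leibniz identity on the same families of triples used before, namely $\{e_i,x_j,x_\ell\}$, $\{x_i,e_j,x_\ell\}$, $\{x_i,e_k,x_\ell\}$ and $\{x_i,x_j,x_\ell\}$, together with the constraints $[x,x]\in Ann_r(L)$ and $[x,y]+[y,x]\in Ann_r(L)$, and then apply the normalizing basis changes $e_i'=e_i-\tfrac{\beta_{i,1}}{\beta_{k,1}}e_k$, $x_i'=x_i-\delta_{i,1}e_k$ and $e_1'=e_1+\gamma_{1,1}e_k$ exactly as in Theorem \ref{thm3.1}. The sign $\alpha_i$ enters precisely through the relation $[e_i,x_i]+[x_i,e_i]=(1+\alpha_i)e_i+(\beta_{i,i}+\gamma_{i,i})e_k\in Ann_r(L)$: for $i\ge t$ this pins $e_i$ down modulo $e_k$, reproducing the rigidity exploited in Theorem \ref{thm3.1}, whereas for $i<t$ only the $e_k$-component is constrained, reproducing the freer behaviour found in the case $\alpha_i=-1$ treated in the preceding theorem. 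I would use these two mechanisms to clear $\gamma_{i,j}$ for $i\ne j$, the components $\nu_{i,j}$ with $j<k$, the off-diagonal $\beta_{i,j}$, and the $\delta_{i,j}$.

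The genuinely new bookkeeping, absent from both special cases, is that basis permutations may be performed inside each of the blocks $\{x_1,\dots,x_{t-1}\}$ and $\{x_t,\dots,x_{k-1}\}$ but not across them; consequently the block containing the distinguished generator --- the one with $\beta_{k,j}=1$, or, when $e_k\notin Ann_r(L)$, the one carrying a surviving $\nu$ --- is an isomorphism invariant, and this is exactly what splits several cases of Theorem \ref{thm3.1} into two. I expect the families $M_{1,t},M_{2,t}$ to emerge from the subcase in which the diagonal decorations clear, separated by whether $e_k\in Ann_r(L)$; the families $M_{3,t}$ (a surviving $\beta$, with $e_k\in Ann_r(L)$) and $M_{6,t}$ (a surviving $\nu$, with $e_k\notin Ann_r(L)$) when the distinguished generator lies in the block $j=t$ with $\alpha=0$; the families $M_{5,t}$ (a surviving $\gamma$, with $e_k\in Ann_r(L)$) and $M_{4,t}$ (the fully decorated case, with $e_k\notin Ann_r(L)$) when it lies in the block $j=1$ with $\alpha=-1$; and $M_{7,t}$ as the Case 3 analogue of $L_5$ and $L_{10}$. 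As a consistency check, the list should specialize at $t=1$ to $L_1,\dots,L_5$ and at $t=k$ to the families $L_6,\dots,L_{10}$ of the preceding theorem.

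I expect the main obstacle to be the cross-block consistency in Case 2. When the distinguished index sits in one block while the generator being tested sits in the other, the Leibniz identities produce mixed quadratic relations of the type $\beta_{1,1}(\nu_{i,1}e_1+\nu_{i,k}e_k)=0$, $\gamma_{1,1}(1+\nu_{1,k})e_k=0$ and $\nu_{i,1}(\nu_{1,k}+1)e_1=0$ already seen at the end of the proof of Theorem \ref{thm3.1}, but now coupling parameters attached to generators of opposite sign. The delicate point is to show that this enlarged polynomial system is solvable only at the stated normal forms and does not create additional families. Once the normal forms are isolated, the remaining work is routine: rescale the surviving $\beta_i,\gamma_i,\nu_i,\delta_{i,j}$ to their canonical ranges and confirm that the seven families, and the algebras inside each family for inequivalent parameters, are pairwise non-isomorphic by comparing invariants such as $\dim Ann_r(L)$, $\dim[L,L]$, and the Jordan data of $R_{x_1},\dots,R_{x_{k-1}}$.
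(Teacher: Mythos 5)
Your proposal follows essentially the same route as the paper: the paper's own proof of this theorem consists of the single sentence ``The proof is similar to the proof of the Theorem \ref{thm3.1},'' and your plan --- rerunning the Case 1/2/3 analysis on the $\beta_{k,j}$, recomputing the Leibniz identities affected by the extra $-e_i$ terms for $i<t$, and tracking which block (the $\alpha=-1$ block versus the $\alpha=0$ block) carries the distinguished index --- is a faithful and considerably more detailed elaboration of exactly that argument. Your assignment of each family $M_{1,t},\dots,M_{7,t}$ to its subcase and the $t=1$/$t=k$ specialization checks against $L_1,\dots,L_5$ and $L_6,\dots,L_{10}$ are consistent with the stated result.
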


\begin{proof}
The proof is similar to the proof of the Theorem \ref{thm3.1}
\end{proof}

In the following theorem we give the classification of $2k-1$-dimensional solvable Leibniz algebras with $k$-dimensional abelian nilradical.

\begin{thm} Let $L$ be a $2k-1$-dimensional solvable Leibniz algebra with $k$-dimensional abelian nilradical. Then $L$ is isomorphic to one of the following pairwise non-isomorphic algebras:
$$\begin{array}{ll}M_{1,t}(\beta_{1}, \beta_{2},\dots, \beta_{k-1}),
& M_{4,t}(1, \beta_2, \beta_3, \dots, \beta_{t-1}, \beta_t,\beta_{t+1}, \dots, \beta_{k-1}),
 \\ M_{2,t}(\beta_1, \dots, \beta_{t-1}, 0, \dots, 0), & M_{4,t}(0,1, \beta_3, \dots, \beta_{t-1}, \beta_t,\beta_{t+1}, \dots, \beta_{k-1}),\\
M_{3,t}(1, \beta_2,  \dots, \beta_{t-1}, \beta_t,\beta_{t+1}, \beta_{t+2} \dots, \beta_{k-1}), & M_{4,t}(0,0,0, \dots, 0, 1, \beta_{t+1}, \dots, \beta_{k-1}),\\
M_{3,t}(0,0, \dots, 0, 1, \beta_{t+1},\beta_{t+2} \dots, \beta_{k-1}), & M_{5,t}(1, \gamma_3, \dots, \gamma_{t-1}, \beta_t, \dots, \gamma_{k-1}),\\
M_{3,t}(0,0, \dots, 0, 0, 1,\beta_{t+2} \dots, \beta_{k-1}), & M_{5,t}(0,0, \dots, 0, 1, \gamma_{t+1}, \dots, \gamma_{k-1}), \\  M_{7,t}(\delta_{i,j}).
 \end{array}$$
where at least one of the parameters $\delta_{i,j}$ is non-zero and this non-zero parameter can be scaled to $1.$
\end{thm}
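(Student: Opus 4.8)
The plan is to read off the list from the three preceding classification theorems and then treat redundancy and pairwise non-isomorphism separately. First I would note that Theorem \ref{thm3.3} is stated for all $t$ with $1\le t\le k$: the value $t=1$ forces every $\alpha_i=0$ and reproduces the algebras $L_1,\dots,L_5$ of Theorem \ref{thm3.1}, while $t=k$ forces every $\alpha_i=-1$ and reproduces $L_6,\dots,L_{10}$. Consequently every algebra of $R(\mathbf{a_k},k-1)$ is already isomorphic to one of the seven families $M_{1,t},\dots,M_{7,t}$, and the existence half of the theorem needs no new computation beyond recording this.

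Next I would normalize the surviving parameters in each family by the basis changes that are still admissible once the multiplication table has been fixed, namely rescalings of the generators and additions of multiples of $e_k$. For the families $M_{3,t}, M_{4,t}, M_{5,t}$ carrying a vector $(\beta_i)$ (resp. $(\gamma_i)$), this residual group scales the vector and clears entries, so one may assume the first nonzero coordinate equals $1$ and the earlier ones vanish; this is exactly the case split producing the several normalized forms listed for $M_{3,t}$, $M_{4,t}$ and $M_{5,t}$. For $M_{7,t}$ the data $\delta_{i,j}$ is a bilinear array on $Q$ transforming under linear changes of the $x_i$, so it reduces to a single entry which can be scaled to $1$, giving the stated condition. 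The families $M_{1,t}$ and $M_{2,t}$ admit no further reduction, so their $\beta_i$ remain as continuous moduli.

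The substantial part is to show that the normalized algebras are pairwise non-isomorphic. Since $N$ is the nilradical, any isomorphism $\phi$ satisfies $\phi(N)=N$ and hence induces an isomorphism of $Q\cong L/N$; thus the simultaneous weights of $N$ under the commuting operators $R_{x_1},\dots,R_{x_{k-1}}$ form an invariant of $L$ up to this induced map. In our normalized bases the weights of $e_1,\dots,e_{k-1}$ are the standard basis vectors and the weight of the distinguished line $\langle e_k\rangle$ is precisely $(\beta_1,\dots,\beta_{k-1})$, so the weight datum already pins down most of the structure. I would combine this with the coarse invariants $\dim Ann_r(L)$, $\dim C(L)$, and the position of $\langle e_k\rangle$ relative to $Ann_r(L)$ to separate the structurally distinct families (for instance $M_{1,t}$ from $M_{2,t}$, and the $\delta$-family $M_{7,t}$ from all the rest). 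Crucially, the integer $t-1$ is itself an invariant: it is detected through the left action, e.g. by which basis vectors $e_i$ the symmetrizations $[x_i,e_i]+[e_i,x_i]$ recover modulo $\langle e_k\rangle$, and this distinguishes families with different $t$ at one stroke.

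The step I expect to be the main obstacle is making the non-isomorphism argument uniform in $k$, so that the continuous parameters are shown to be genuine invariants rather than artefacts of the chosen basis. An isomorphism may simultaneously mix the $x_j$'s within each $\alpha$-block, rescale the $e_i$'s, and shift by $e_k$, and one must verify that no such combined change alters a normalized parameter. The efficient route is to use the weight decomposition to force $\phi$ into a small admissible (block-diagonal, up to scaling) shape before comparing structure constants; then equating the transformed table with the target table yields the constraint equations whose only solutions fix the parameters. Carrying this out cleanly across all families, while keeping the bookkeeping finite despite $k$ being arbitrary, is where the real effort lies.
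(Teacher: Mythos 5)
Your overall strategy (collect the families from the preceding theorems, normalize parameters by residual basis changes, then argue pairwise non-isomorphism) matches the paper's, and your normalizations of $M_{3,t}$, $M_{4,t}$, $M_{5,t}$ and $M_{7,t}$ are in the right spirit. But there are two concrete gaps, and they are exactly the two nontrivial steps of the paper's proof. First, you assert that ``$M_{1,t}$ and $M_{2,t}$ admit no further reduction.'' This is false for $M_{2,t}$: if some $\beta_j\neq 0$ with $j\geq t$, the basis change $e_t'=e_k$, $e_j'=e_t$, $e_k'=e_j$, $x_t'=\frac{1}{\beta_j}x_j$, $x_i'=x_i-\frac{\beta_i}{\beta_j}x_j$ gives
$$M_{2,t}(\beta_1,\dots,\beta_j,\dots,\beta_{k-1})\cong M_{1,t+1}\Bigl(-\tfrac{\beta_1}{\beta_j},\dots,\tfrac{1}{\beta_j},\dots,-\tfrac{\beta_{k-1}}{\beta_j}\Bigr),$$
which is why the theorem lists only $M_{2,t}(\beta_1,\dots,\beta_{t-1},0,\dots,0)$. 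Your proposal provides no mechanism to produce these trailing zeros. Second, your list of surviving families silently includes $M_{6,t}$ in the existence step and then never disposes of it; the paper eliminates it via $e_1'=e_k$, $e_t'=e_1$, $e_k'=e_t$, $x_1'=x_t$, $x_t'=x_1$, giving $M_{6,t}(\nu)\cong M_{5,t+1}(\nu)$. Without this, your final list does not match the statement.

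These omissions are not merely bookkeeping: they undercut your proposed non-isomorphism argument as well. You claim that $t-1$ is an invariant detected by the symmetrizations $[x_i,e_i]+[e_i,x_i]$ modulo $\langle e_k\rangle$, but the two isomorphisms above each shift $t$ to $t+1$ precisely because the line $\langle e_k\rangle$ is not canonical --- the basis change moves $e_k$ into the ``eigenvector'' part of the nilradical (indeed in the first change one computes $[x_t',e_t']=\frac{1}{\beta_j}[x_j,e_k]=-e_t'$, creating a new index in the $\alpha=-1$ block). So the invariants you propose can only separate the families \emph{after} these cross-family, $t$-shifting reductions have been carried out, and identifying those reductions is the substantive content of the proof that your proposal is missing.
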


\begin{proof} From Theorem \ref{thm3.3} we have the list of solvable Leibniz algebras from the class $R(a_k, k-1)$. It is obvious that the class $M_{1,t}(\beta_{i})$ gives us pairwise non-isomorphic algebras for any parameters $\beta_i \in \mathbb{C}.$ Moreover, in the case $t=1$ we get the algebra $L_1(\beta_i.)$

In the class of algebras $M_{2,t}(\beta_{i}),$ at least one of the parameters $\beta_i$ is non-zero. Otherwise we obtain the algebra $M_{1,t}(0,0,\dots, 0).$
 Moreover if $\beta_j \neq 0$ with $j \geq t$ then making the change
 $$e_t'=e_k, \quad e_j'=e_t, \quad e_k'=e_j,$$
 $$x_t'=\frac 1 {\beta_j}x_j, \quad x_j'=x_t - \frac {\beta_t}{\beta_j}x_j, \quad x_i'=x_i - \frac {\beta_i}{\beta_j}x_j, \quad 1 \leq i \leq k-1,$$
 we obtain that
  $$M_{2,t}(\beta_1, \dots , \beta_{j}, \dots \beta_{k-1}) \cong M_{1,t+1}(-\frac{\beta_1}{\beta_j}, \dots, \frac{1}{\beta_j}, \dots, -\frac{\beta_{k-1}}{\beta_j} ).$$

Thus, we have that $M_{2,t}(\beta_1, \dots, \beta_{t-1}, 0, \dots, 0)$ are pairwise non-isomorphic algebras to the $M_{1,t}$ algebras.

 In the class of algebras $M_{3,t}(\beta_{1}, \beta_2, \dots, \beta_{k-1})$ also at least one of the parameters $\beta_i$ is non-zero.
  Moreover, if $\beta_j\neq 0$ with $1 \leq  j \leq t-1$, then without lost of generality we may suppose $\beta_1 \neq 0$ and making the change $e_k' =\beta_1e_k$ we may assume $\beta_1=1.$ In the case of $\beta_j= 0$ for $1 \leq  j \leq t-1$ and $\beta_t \neq 0,$ the parameter $\beta_t$ can be scaled to $1.$
  If $\beta_j= 0$ for $1 \leq  j \leq t,$ then without lost of generality we may assume $\beta_{t+1} \neq 0$ and making the change  $e_k' =\beta_{t+1}e_k$ we obtain $\beta_{t+1}=1.$

 Thus,
$$\begin{array}{ll}M_{3,t}(1, \beta_2,  \dots, \beta_{t-1}, \beta_t,\beta_{t+1}, \beta_{t+2} \dots, \beta_{k-1})\\
M_{3,t}(0,0, \dots, 0, 1, \beta_{t+1},\beta_{t+2} \dots, \beta_{k-1})\\
M_{3,t}(0,0, \dots, 0, 0, 1,\beta_{t+2} \dots, \beta_{k-1}). \end{array}$$
are non-isomorphic algebras.

Analyzing the class of algebras $M_{4,t}(\beta_{1}, \beta_2,\dots, \beta_{k-1})$ and $M_{5,t}(\gamma_{1}, \gamma_2, \dots, \gamma_{k-1})$ similarly  we obtain following non-isomorphic algebras

$$\begin{array}{ll}M_{4,t}(1, \beta_2, \beta_3, \dots, \beta_{t-1}, \beta_t,\beta_{t+1}, \dots, \beta_{k-1})\\
M_{4,t}(0,1, \beta_3, \dots, \beta_{t-1}, \beta_t,\beta_{t+1}, \dots, \beta_{k-1})\\
M_{4,t}(0,0,0, \dots, 0, 1, \beta_{t+1}, \dots, \beta_{k-1}) \end{array}, \quad \begin{array}{ll}M_{5,t}(1, \gamma_3, \dots, \gamma_{t-1}, \beta_t, \dots, \gamma_{k-1})\\
M_{5,t}(0,0, \dots, 0, 1, \gamma_{t+1}, \dots, \gamma_{k-1}). \end{array}$$

In the class of $M_{6,t}(\nu_{1}, \nu_{2}, \dots, \nu_{k-1})$ making the change
$$e_1'=e_k, \quad e_t'=e_1, \quad e_k'=e_t, \qquad x_1'=x_t, \quad x_t'=x_1,$$
we get that $M_{6,t}(\nu_{1}, \nu_{2}, \dots, \nu_{k-1}) \cong M_{5,t+1}(\nu_{1}, \nu_{2}, \dots, \nu_{k-1})$.

\end{proof}

\textbf{Acknowledgements.}
This material is based upon work supported by the National Science Foundation under Grant No. NSF 1658672.


\begin{thebibliography}{9}

\bibitem{Ad}
{\rm J. Q. Adashev, M. Ladra, B. A. Omirov}
Solvable Leibniz Algebras with Naturally Graded Non-Lie $p$-Filiform Nilradicals.
{\it Communications in Algebra}, 45(10), 2017, 4329--4347.

\bibitem{Ancochea}{\rm Ancochea J.M., Campoamor-Stursberg R., García L.}
{Indecomposable Lie algebras with nontrivial Levi decomposition cannot have filiform radical}, {\it Int. Math. Forum},  1 (5-8), 2006, 309--316.

\bibitem{Bar}
{\rm Barnes D.W.}, On Levi's theorem for Leibniz algebras, {\it Bull.
  Aust. Math. Soc.}, 86(2), 2012, 184--185.

\bibitem{Bl} {\rm Blokh A.},  On a generalization of the concept of Lie algebra, {\it Dokl.
Akad. Nauk SSSR}, 165, 1965, 471--473.

\bibitem{CanKhud}{\rm Ca\~{n}ete E.M.,   Khudoyberdiyev A.Kh.,}
 The classification of 4-dimensional Leibniz algebras, {\it Linear Algebra and its Applications}, 439(1), 2013, 273--288.

\bibitem{Casas}
J. M. Casas, M. Ladra, B. A. Omirov, I. A. Karimjanov
{Classification of solvable Leibniz algebras with null-filiform nilradical}.
{\it Linear and Multilinear Algebra}, 61 (6), 2013, 758--774.

\bibitem{Casas 3-dim}{\rm Casas J.M., Insua M.A., Ladra M., Ladra S.,}  An algorithm for the classification of 3-dimensional complex
Leibniz algebras, {\it Linear Algebra and its Applications}, 436, 2012, 3747--3756.

\bibitem{Jac} {\rm Jacobson N. }{Lie algebras,} {\it Interscience Publishers}, Wiley, New York, 1962.

\bibitem{Khud}
A. Kh. Khudoyberdiyev, I. S. Rakhimov, Sh. K. Said Husain
{On classification of 5-dimensional solvable Leibniz algebras}.
{\it Linear Algebra and its Applications}, 457, 2014, 428--454.

\bibitem{LinBos}  {\rm Lindsey Bosko-Dunbar, Matthew Burke, Jonathan D. Dunbar, J.T. Hird, Kristen Stagg,}
 Solvable Leibniz algebras with abelian nilradical, 2014,  arXiv[Math]:1409.0936. 23 p.

\bibitem{Lad}  {\rm Khudoyberdiyev A.Kh., Ladra M., Omirov B.A.,} On solvable
Leibniz algebras whose nilradical is a direct sum of
null-fililiform algebras, {\it Linear and Multilinear Algebra},
62 (9), 2014, 1220--1239

\bibitem{Loday1} {\rm Loday J.L.,}  Une version non commutative des
alg$\acute{e}$bres de Lie: les alg$\acute{e}$bres de Leibniz,
{\it Ens. Math.}, 39, 1993, 269--293.

\bibitem{Muba}
{\rm  Mubarakzjanov G.M.}, On solvable {L}ie algebras ({R}ussian), {\it Izv.
  Vys\v s. U\v cehn. Zaved. Matematika}, 1963, 114--123.

  \bibitem{NdWi}
{\rm Ndogmo J.C., Winternitz P.},  Solvable {L}ie algebras with abelian
  nilradicals, {\it J. Phys. A},  27, 1994, 405--423.

  \bibitem{SnWi}
{\rm {\v{S}}nobl L. Winternitz P.}, A class of solvable {L}ie
algebras and their {C}asimir invariants, {\it J. Phys. A},  38,
2005, 2687--2700.


\end{thebibliography}
\end{document}